\documentclass[12pt]{amsart}
\usepackage{graphicx,verbatim,amssymb}
\usepackage[active]{srcltx}


\vfuzz2pt 
\hfuzz2pt 
\newtheorem{thm}{Theorem}[section]
\newtheorem{cor}[thm]{Corollary}
\newtheorem{lem}[thm]{Lemma}
\newtheorem{prop}[thm]{Proposition}

\newtheorem*{thmA}{Theorem A}
\newtheorem*{thmB}{Theorem B}

\theoremstyle{definition}
\newtheorem{defn}[thm]{Definition}
\theoremstyle{remark}
\newtheorem{rem}[thm]{Remark}
\numberwithin{equation}{section}


\def\R{\mathbb{R}}
\def\Z{\mathbb{Z}}
\def\C{\mathbb{C}}
\def\Q{\mathbb{Q}}
\def\F{\mathbb{F}}
\def\xb{\mathbf{x}}
\def\yb{\mathbf{y}}
\def\zb{\mathbf{z}}
\def\ab{\mathbf{a}}  \def\bb{\mathbf{b}}
\def\cb{\mathbf{c}}  \def\db{\mathbf{d}}
\def\eb{\mathbf{e}}
  


\newcommand{\eps}{\varepsilon}

\newcommand{\A}{\mathcal{A}}

\def\0{\varnothing}

\def\ol{\overline}

\setlength{\topmargin}{0cm}
\setlength{\textheight}{21 cm} \setlength{\textwidth}{15 cm}
\setlength{\oddsidemargin}{-0.1cm}
\setlength{\evensidemargin}{-0.1cm}
\begin{document}

\title[Disquisitiones 235]
{Disquisitiones 235}
\author{Vladlen Timorin}

\begin{abstract}
  Section 235 of Gauss' fundamental treatise ``Disquisitiones Arithmeticae''
establishes
basic properties that compositions of binary quadratic forms must satisfy.
Although this section is very technical, it contains truly important results.
We review section 235 using a more invariant language and simplifying the arguments.
We also make the statements slightly stronger by removing unnecessary
assumptions.
\end{abstract}

\address[Vladlen Timorin]{Faculty of Mathematics and Laboratory of Algebraic Geometry,
National Research University Higher School of Economics,
7 Vavilova St 117312 Moscow; Independent University of Moscow,
Bolshoy Vlasyevskiy Pereulok 11, 119002 Moscow, Russia}

\email{vtimorin@hse.ru}

\thanks{
The author was partially supported by
the Dynasty Foundation grant, RFBR grants
11-01-00654-a, 12-01-33020, 13-01-12449,
and AG Laboratory NRU HSE, MESRF grant ag. 11 11.G34.31.0023
}

\maketitle

\section{Introduction}

One of the major motivations of Gauss' ``Disquisitiones'' \cite{G} was the following question:
what are all possible values that a given binary integer quadratic form can take?
A \emph{binary quadratic form} in variables $x$ and $y$ is a function
$$
f(x,y)=ax^2+bxy+cy^2,
$$
where $a$, $b$, $c$ are given numbers.
We will use the abbreviation $(a,b,c)$ for the quadratic form as above.
In the sequel, by a \emph{form} we always mean a binary quadratic form.
A form $(a,b,c)$ is \emph{integer} if
$a$, $b$, $c\in\Z$ (we write $\Z$ for the set of all integers).\footnote{
In fact, Gauss only considered forms $(a,b,c)$, for which $b$ is even,
and used $(a,\frac b2,c)$ to denote these forms.
}
An integer $m$ is said to be \emph{representable} by a quadratic form $f$
if $m=f(x,y)$ for some $x$, $y\in\Z$.

The following observations lead to the notion of composition of quadratic forms:
\begin{enumerate}
  \item The product of two integers representable as sums of squares, i.e.
  representable by $(1,0,1)$, is also representable as a sum of squares.
  This follows from the formula
  $$
  (x^2+y^2)({x'}^2+{y'}^2)=(xx'-yy')^2+(xy'+x'y)^2.
  $$

  \item More generally, the product of two integers representable by
  $(1,0,d)$, where $d\in\Z$, is also representable by $(1,0,d)$.
  This is explained by the formula
  $$
  (x^2+dy^2)({x'}^2+d{y'}^2)=(xx'-dyy')^2+d(xy'+x'y)^2.
  $$
  In \cite{Ai,AT}, a large class of binary quadratic forms is
  described that satisfy the \emph{semigroup law}: the product of two
  integers representable by any form of this class is also representable
  by the same form. See \cite{EF} for a complete description of forms
  obeying the semigroup law.

  \item It often happens that the product of two integers $m$ and $m' $representable
  by different quadratic forms $f$ and $f'$ is representable by a third
  quadratic form $F$; moreover, the form $F$ depends only on $f$ and $f'$ but
  not on $m$ and $m'$. For example, the product of two integers representable
  by $(2,2,3)$ is an integer representable by $(1,0,5)$, as follows from the formula
$$
(2x^2+2xy+3y^2)(2{x'}^2+2{x'}y'+3{y'}^2)=X^2+5Y^2,
$$
where $X=2xx'+xy'+yx'-2yy'$ and $Y=xy'+yx'+yy'$.

  \item Although the product of \emph{two} integers representable by a
  given quadratic form $f$ is not always representable by $f$, the product
  of \emph{three} integers representable by $f$ is always representable by $f$.
  This is what Vladimir Arnold named the \emph{trigroup law} \cite{Ar}.
\end{enumerate}

Gauss did not use vector notation.
However, it helps to see an invariant meaning behind the statements of Gauss.
We will use bold letters to denote 2-dimensional vectors.
For example, if $\xb=(x,y)$ and $f$ is a quadratic form, then $f(\xb)$ is
the same as $f(x,y)$.
Thus, we can think of $f$ as a function on $\Z^2$.
We can even replace $\Z^2$ with any 2-dimensional lattice $L_f$, i.e. a free
abelian $\Z$-module of rank 2.
The notion of a lattice often includes a quadratic form $f:L_f\to\R$
(we use $\R$ to denote the set of all real numbers).
A geometric viewpoint on lattices and quadratic forms consists of considering
$f(\xb)$ as the square of the length of $\xb$, i.e. the inner (dot) product of $\xb$
with itself.
Note, however, that $f(\xb)$ does not need to be positive.
A form $f$ defines its \emph{polarization}, which is an analog of the dot product.
The polarization of $f$ is a symmetric bilinear functional $f(\xb,\yb)$
of $\xb$ and $\yb\in L_f$ such that $f(\xb,\xb)=f(\xb)$.
Abusing the notation, we use the same letter $f$ to denote a quadratic form and
its polarization: if $x$ and $y$ are numbers, then $f(x,y)$ is the value of $f$
at $(x,y)$; on the other hand, if $\xb$ and $\yb$ are vectors, then $f(\xb,\yb)$
is the value of the polarization of $f$ at $\xb$ and $\yb$.
We will also call $f(\xb,\yb)$ the \emph{inner product} of $\xb$ and $\yb$
(with respect to $f$).

\begin{rem}[Integrality]
  Using a more invariant language, we can say that a form $f$ on $L_f$ is
  integer if the values $f(\xb)$ are integers for all $\xb\in L_f$.
  Note that there is a competing, more restrictive, version of integrality: we may want to
  require that all inner products $f(\xb,\yb)$ be integer.
  This second version of integrality is equivalent (in case $f=(a,b,c)$)
  to saying that the numbers $a$, $\frac b2$, $c$ are integers.
  Although Gauss used the latter version, we will use the former, more
  general, version of integrality.
\end{rem}

\begin{defn}[Composition]
Consider a bilinear map $\circ:\Z^2\times\Z^2\to\Z^2$ (the value of $\circ$
at vectors $\xb$ and $\xb'$ is denoted by $\xb\circ\xb'$).
Suppose that $f$, $f'$ and $F$ are three quadratic forms on $\Z^2$ satisfying the relation
\begin{equation}
\label{e:comp}
F(\xb\circ\xb')=f(\xb)f'(\xb'),\quad\forall\ \xb,\xb'\in\Z^2.
\end{equation}
Suppose additionally that the vectors $\xb\circ\xb'$ span $\Z^2$.
Then $\circ$ is called a \emph{composition law}, and the form $F$
is said to be \emph{composed} of $f$ and $f'$.
\end{defn}

\begin{rem}[Lattices in quadratic rings]
\label{r:rings}
The notion of composition is motivated by the examples listed above.
There is also a very important class of examples, which led to modern
algebraic treatment of class groups.
Namely, suppose that $R$ is a \emph{quadratic ring}, i.e. a ring,
whose underlying additive group is isomorphic to $\Z^2$ (e.g. we
may consider the ring $\Z\oplus\sqrt{-1}\Z$ of Gaussian integers or
the ring of all algebraic integers contained in a quadratic field
$\Q(\sqrt{d})$, the field obtained from the field $\Q$ of all rational
numbers by adjoining a square root of some square-free integer $d$).
Suppose that $N$ is a quadratic form on $R$ (usually called a \emph{norm})
such that $N(uv)=N(u)N(v)$ for all $u$, $v\in R$.
For example, if $R$ is the ring of Gaussian integers, we may set
$N(u)=u\ol u$ to be the square of the modulus.
Consider sublattices $L$, $L'\subset R$, and let $f$, $f'$ be
the restrictions of $N$ to $L$, $L'$ (respectively).
Define the lattice $LL'$ as the additive subgroup of $R$ generated by
all products $uu'$, where $u\in L$ and $u'\in L'$.
Finally, define the quadratic form $F$ on $LL'$ as the restriction of $N$ to $LL'$.
Then the multiplication in $R$ defines a composition law, as we always have
$$
F(uu')=f(u)f'(u').
$$
F. Klein \cite{K} argues that Gauss had been aware of these examples but opted not
to use more sophisticated objects than just integers, not even complex numbers.
\end{rem}

We now introduce some basic invariants of binary quadratic forms that were
studied by Gauss in his ``Disquisitiones''. A more systematic
treatment of these invariants has been given later by H. Minkowski \cite{M}.
For every integer quadratic form $f:L_f\to\Z$, we let $\delta(f)$ denote
the greatest common divisor of all values $f(\xb)$, where $\xb$ runs through $L_f$.
We let $\delta'(f)$ denote the greatest common divisor of all doubled inner products
$2f(\xb,\yb)$, where $\xb$ and $\yb$ run through $L_f$.
If $f=(a,b,c)$, then we have
$$
\delta(f)=\gcd(a,b,c),\quad \delta'(f)=\gcd(2a,b,2c).
$$
Clearly, the number $\delta(f)$ divides the number $\delta'(f)$,
and the quotient $\sigma(f)=\delta'(f)/\delta(f)$ is equal to 1 or 2.

We need to explain the \emph{invariance} property
of the invariants $\delta(f)$ and $\sigma(f)$.
Note that the group $GL_2(\Z)$ of all automorphisms of the lattice $\Z^2$
(the automorphisms of $\Z^2$ can be represented by integer matrices
of determinant $\pm 1$)
acts on quadratic forms: an automorphism $A\in GL_2(\Z)$ takes a form
$f$ to the form $f':\xb\mapsto f(A\xb)$.
The new form $f'$ takes exactly the same values at points of $\Z^2$ as
the old form $f$.
Thus, if we study numbers representable by quadratic forms, we may not
distinguish between the forms $f$ and $f'$.
Such forms (that are obtained from each other by an automorphism $A\in GL_2(\Z)$)
are said to be \emph{equivalent}.
If, additionally, $\det(A)=1$, i.e. if $A$ preserves the orientation, then
we say that $f$ and $f'$ are \emph{properly equivalent}.
The invariance of, say, $\delta(f)$ means that $\delta(f)=\delta(f')$ for
any pair of equivalent forms $f$, $f'$.
This is clear since $\delta(f)$ depends only on the set of integers representable
by $f$, and this set does not change under the action of $GL_2(\Z)$.
Similarly, it is easy to see that $\sigma(f)$ is also an invariant.

\begin{defn}[Discriminant]
There is another invariant of a binary form $f$ called the \emph{discriminant} of $f$.
If $f=(a,b,c)$, then the discriminant $d(f)$ of $f$ is defined by the
formula
\begin{equation}
\label{e:det}
d(f)=b^2-4ac.
\end{equation}
The discriminant is also an invariant, although this is not immediately obvious
from the definition. Later on, we will give a more invariant definition of
the discriminant, from which the invariance will be clear.
\end{defn}

It is clear from the definition of the discriminant that, for an integer form $f$,
the number $d(f)$ is divisible by
$\delta(f)^2$. We set $\theta(f)=d(f)/\delta(f)^2$.
Then $\theta(f)$ is also an integer invariant of $f$.
The following theorem of Gauss shows how the invariants $\delta$, $\sigma$ and $\theta$
of integer quadratic forms are related to the corresponding invariants of their composition.

\begin{thmA}
  Suppose that a form $F$ is composed of \emph{integer} forms $f$ and $f'$.
  Then $F$ is also an integer form.
  Moreover, we have
  $$
  \delta(F)=\delta(f)\delta(f'),\quad
  \sigma(F)=\min(\sigma(f),\sigma(f')),\quad
  \theta(F)=\gcd(\theta(f),\theta(f')).
  $$
\end{thmA}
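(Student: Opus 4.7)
I would start by deriving three polarization identities from $F(\xb\circ\xb')=f(\xb)f'(\xb')$ using bilinearity of $\circ$. Polarizing in the first argument (substitute $\xb+\yb$), in the second (substitute $\xb'+\yb'$), and in both at once yields, after cancellation of the terms already known,
\begin{align*}
F(\xb\circ\xb',\xb\circ\yb') &= f(\xb)\,f'(\xb',\yb'),\\
F(\xb\circ\xb',\yb\circ\xb') &= f(\xb,\yb)\,f'(\xb'),\\
F(\xb\circ\xb',\yb\circ\yb') + F(\xb\circ\yb',\yb\circ\xb') &= 2\,f(\xb,\yb)\,f'(\xb',\yb').
\end{align*}
Combined with the integrality of $f(\xb)$, $f'(\xb')$, $2f(\xb,\yb)$, $2f'(\xb',\yb')$, these three identities are the technical engine driving all four claims.

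Next I would prove integrality of $F$ and $\delta(F)=\delta(f)\delta(f')$ together. Using the spanning hypothesis, any $\zb\in\Z^2$ is an integer combination of products $\xb_i\circ\xb'_j$ arranged on a $2\times 2$ grid of source vectors; expanding $F(\zb)$ bilinearly gives diagonal terms $n_{ij}^2 f(\xb_i)f'(\xb'_j)$, off-diagonal terms with a shared argument (handled by the first two identities), and two ``generic'' cross terms that pair into a conjugate couple whose sum is pinned by the third identity to $4f(\xb_1,\xb_2)f'(\xb'_1,\xb'_2)$. Every contribution lies in $\delta(f)\delta(f')\,\Z$, so $\delta(f)\delta(f')\mid\delta(F)$. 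The reverse divisibility follows from $\delta(F)\mid F(\xb\circ\xb')=f(\xb)f'(\xb')$ together with the identity $\gcd_{\xb,\xb'}f(\xb)f'(\xb')=\delta(f)\delta(f')$.

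For $\sigma(F)=\min(\sigma(f),\sigma(f'))$, recall $\sigma(f)=2$ exactly when $f/\delta(f)$ has integer polarization. If $\sigma(f)=1$, primitivity furnishes $\xb,\yb$ with $2f(\xb,\yb)/\delta(f)$ odd and $\xb'$ with $f'(\xb')/\delta(f')$ odd; the second polarization identity then shows $2F(\xb\circ\xb',\yb\circ\xb')/\delta(F)$ is odd, forcing $\sigma(F)=1$. The case $\sigma(f')=1$ is symmetric via the first identity. Conversely, if $\sigma(f)=\sigma(f')=2$, the right-hand sides of all three identities become divisible by $2\delta(F)$, and a refined version of the spanning-plus-pairing argument (tracking divisibility by $2$ rather than just integrality) yields $\sigma(F)=2$.

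The most delicate claim is $\theta(F)=\gcd(\theta(f),\theta(f'))$. Using the invariant formula $d(f)=4\bigl(f(\pb,\qb)^2-f(\pb)f(\qb)\bigr)$ for any $\Z$-basis $\pb,\qb$, one computes $d(F)$ at a basis of $\Z^2$ built from products $\xb\circ\xb'$; substituting the three polarization identities expresses $d(F)$ as an algebraic combination of $d(f)$, $d(f')$, and the ``primitivity data'' of the composition map $\mu\colon\Z^2\otimes\Z^2\to\Z^2$. The main obstacle is twofold: the third identity pins $F(\xb\circ\xb',\yb\circ\yb')$ only up to its conjugate, so one must check that the combination appearing in $d(F)$ is invariant under that ambiguity; and after dividing by $\delta(F)^2=\delta(f)^2\delta(f')^2$, the surviving quantity must be identified precisely with $\gcd(\theta(f),\theta(f'))$, which uses the full strength of the spanning hypothesis to control the gcd structure of $\mu$. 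This last synthesis is where the invariant reformulation should most visibly simplify Gauss's original computational treatment.
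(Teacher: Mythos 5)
Your three polarization identities are correct (the third is exactly the paper's identity (3.2), and the first two are its specializations), but they are not enough: they determine every mixed inner product $F(\xb\circ\xb',\yb\circ\yb')$ \emph{except} in the generic case, where they pin down only the sum of the two conjugate terms $F(\xb\circ\xb',\yb\circ\yb')+F(\xb\circ\yb',\yb\circ\xb')$ and leave their difference free. The paper isolates this difference as $\tfrac12\bigl(F(\xb\circ\xb,\yb\circ\yb)-F(\xb\circ\yb,\yb\circ\xb)\bigr)=\Delta\det(\xb,\yb)^2$, and the entire theorem hinges on computing $\Delta$: via Lagrange's identity for the $2\times 2$ determinant of inner products one gets $4\Delta=d(F)\nu\nu'$ and $16\Delta^2=d(f)d(f')$, so $4\Delta$ is an integer of the same parity as $bb'$. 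Your pairing argument for integrality breaks exactly here: writing $\zb=\sum n_{ij}\,\eb_i\circ\eb_j$, the two conjugate cross terms enter $F(\zb)$ with the \emph{different} coefficients $2n_{11}n_{22}$ and $2n_{12}n_{21}$, so knowing only their sum (which is $\tfrac12 bb'$, a half-integer when $b,b'$ are both odd) gives no control of the combination. One needs each term separately, namely $\tfrac14 bb'\pm\Delta$, and it is the arithmetic fact $4\Delta=\pm\sqrt{dd'}\equiv bb'\pmod 2$ that makes $2F(\eb_1\circ\eb_1,\eb_2\circ\eb_2)$ and $2F(\eb_1\circ\eb_2,\eb_2\circ\eb_1)$ integers. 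The same gap recurs in your $\sigma(f)=\sigma(f')=2$ case (there one needs $\Delta$ itself to be an integer) and in your $\theta$ discussion: the quantity entering $d(F)$ is \emph{not} invariant under the conjugate ambiguity --- rather, the ambiguity must be resolved, and is.

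Separately, the $\theta$ claim is only named, not proved. The paper's route is concrete and different from what you outline: fixing one argument of $\circ$ and comparing discriminants gives $d(F)\det(\circ\xb')^2=d(f)f'(\xb')^2$ and its twin, from which $d(f)\delta(f')^2/d(F)$ and $d(f')\delta(f)^2/d(F)$ are identified as the gcd's of the families $\det(\circ\xb')$ and $\det(\xb\circ)$; the decisive step is a primitivity lemma (no integer $e>1$ divides all numbers in both families), proved by expanding $\det((\xb+\xb')\circ\yb,(\xb+\xb')\circ\yb')$ bilinearly and then ruling out $e=2$ by reducing the four vectors $\eb_i\circ\eb_j$ modulo $2$. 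Your phrase about using ``the full strength of the spanning hypothesis to control the gcd structure'' points at this lemma but supplies no argument for it; that is the missing content.
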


Another result of Section 235 is the following

\begin{thmB}
  A composition of integer quadratic forms $f$ and $f'$ exists if and only
  if the ratio $d(f)/d(f')$ is a square of some rational number.
\end{thmB}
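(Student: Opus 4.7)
The two directions have rather different character: necessity falls out of a single discriminant identity, while sufficiency requires constructing an explicit composition law.

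\emph{Necessity.} For each fixed $\xb' \in \Z^2$, the assignment $\xb \mapsto \xb\circ\xb'$ is a linear map $L_{\xb'} : \Z^2 \to \Z^2$, and the defining relation \eqref{e:comp} rewrites as $L_{\xb'}^* F = f'(\xb')\cdot f$: pulling $F$ back along $L_{\xb'}$ gives $f$ rescaled by $f'(\xb')$. Using that discriminant transforms as $d(A^* g) = (\det A)^2 d(g)$ under linear pullback and as $d(\lambda g) = \lambda^2 d(g)$ under scalar rescaling, one obtains the polynomial identity in $\xb'$
\[
(\det L_{\xb'})^2\, d(F) \;=\; f'(\xb')^2\, d(f).
\]
In the nondegenerate case ($d(f) \neq 0$), the polynomial $\det L_{\xb'}$ cannot vanish identically (else $f' \equiv 0$), so $d(F)/d(f) = (f'(\xb')/\det L_{\xb'})^2$ is a rational square. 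A symmetric argument, fixing $\xb$ and varying $\xb'$, shows $d(F)/d(f')$ is also a rational square, and taking the ratio yields the claim.

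\emph{Sufficiency.} Suppose $d(f)/d(f')$ is a rational square, so $d(f)$ and $d(f')$ share a common square-free part $d_0$. Following Remark~\ref{r:rings}, I realize $f$ and $f'$ as scaled norm forms on lattices inside $K = \Q(\sqrt{d_0})$, and define $F$ via multiplication in $K$. After a $GL_2(\Z)$ change of basis if needed, assume $f=(a,b,c)$ with $a\ne0$. Writing $d(f) = d_0 m^2$, set $\tau = (b + m\sqrt{d_0})/2 \in K$; a direct check gives $\tau + \bar\tau = b$ and $\tau\bar\tau = ac$, whence for $\xi = xa + y\tau$ the norm is $N(\xi) = a\,f(x,y)$. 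Let $L = \Z a + \Z\tau$ and construct $L' \subset K$ analogously from $f'$. Let $LL'$ be the additive subgroup of $K$ generated by the products $\xi\xi'$; this is a rank-two $\Z$-lattice in $K$. Fix a basis $\eta_1,\eta_2$ of $LL'$ and expand
\[
\xi\xi' \;=\; P(\xb,\xb')\,\eta_1 + Q(\xb,\xb')\,\eta_2,
\]
with $P$ and $Q$ bilinear integer forms on $\Z^2\times\Z^2$. Set $\xb\circ\xb' := (P(\xb,\xb'),Q(\xb,\xb'))$ and $F(p,q) := N(p\eta_1 + q\eta_2)/(aa')$. Then
\[
F(\xb\circ\xb') \;=\; N(\xi\xi')/(aa') \;=\; N(\xi)N(\xi')/(aa') \;=\; f(\xb)f'(\xb'),
\]
the spanning condition holds by the very definition of $LL'$ as the additive group generated by the products $\xi\xi'$, and integrality of $F$ is delivered a posteriori by Theorem~A applied to the composition law so constructed.

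\emph{Main obstacle.} Necessity reduces to one short identity. The bulk of the work lies in sufficiency: checking that $LL'$ has rank two in $K$, ensuring that the scaled norm $N/(aa')$ takes integer values on $LL'$ (which we outsource to Theorem~A once the composition law is in hand), and covering the degenerate situations separately --- the case $d_0 = 1$, in which $K = \Q$ and both forms split over $\Q$ and the composition must be written down directly, the case $a = 0$, treated by a change of basis, and the case $d(f) = 0$ or $d(f') = 0$.
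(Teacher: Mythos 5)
Your argument is correct and, in substance, follows the same route as the paper in both directions. The necessity half is literally the paper's computation: fixing $\xb'$ and pulling $F$ back along $\xb\mapsto\xb\circ\xb'$ gives $d(F)\det(\circ\xb')^2=d(f)f'(\xb')^2$ (equation (\ref{e:circy}) of the paper), and the symmetric identity in the other variable yields the claim. For sufficiency the guiding idea is also the same --- realize $f$ and $f'$ as rescaled norm forms on lattices in a two-dimensional quadratic algebra and multiply the lattices, as in Remark \ref{r:rings} --- but the concrete realizations differ. The paper works in $\R[t]/(t^2-d)$, rescales the rational reductions of $f$, $f'$ by $\sqrt{a}$, $\sqrt{a'}$, and invokes the classification of finitely generated abelian groups to identify $LL'$ with $\Z^2$; you instead take the classical integral lattice $\Z a+\Z\tau$ with $\tau=(b+m\sqrt{d_0})/2$ inside $\Q(\sqrt{d_0})$. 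Your version is more explicit, stays inside a $\Q$-algebra, and avoids square roots of possibly negative leading coefficients, but it genuinely breaks when $d_0=1$: then $\Q(\sqrt{d_0})=\Q$ and $\Z a+\Z\tau$ has rank one, whereas the paper's split algebra $\R[t]/(t^2-d)$ remains two-dimensional even when $d$ is a perfect square and so absorbs that case uniformly. You flag this, but to be complete you would need to write down the split-case composition explicitly (or replace $\Q(\sqrt{d_0})$ by $\Q[t]/(t^2-d)$, which fixes the issue at no cost). Both write-ups are equally informal about the truly degenerate situations ($f\equiv 0$ or vanishing discriminants), and your observation that integrality of $F$ need not be checked --- it is not required by the definition of composition and is supplied afterwards by Theorem A --- matches the paper's logic.
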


In fact, only the ``only if'' part of this statement is discussed in Section 235.
We give the ``if'' part for completeness; it is discussed in later
sections of ``Disquisitiones''.

\begin{rem}[Perspectives]
  It would be interesting to find multidimensional generalizations of Theorem A.
  It is a joint project of E. Duriev, A. Pakharev and V. Timorin to implement this
  in dimension 4, i.e. for quaternary forms.
  Examples of compositions of quaternary forms can be obtained by multiplying
  certain sublattices in the ring of quaternions.
  Compositions of ternary (3-variable) forms are less interesting because, to a
  large extent, they reduce to compositions of binary forms.
  A multidimensional generalization of Theorem B is given by the classification
  of composition algebras over $\Q$, see \cite{J}.
\end{rem}

\begin{rem}[Ideal classes]
  The language of ideal classes, introduced by R. Dedekind in his
  supplements to \cite{D}, deals with rather
  special type of composition, namely, with composition of forms $f$, $f'$,
  for which $\delta(f)=\delta(f')=1$ and $d(f)=d(f')$.
  In fact, it is exactly Theorem A that allows to reduce the general case
  to the case $\delta(f)=\delta(f')=1$.
  Since ideal classes are defined in any commutative ring, Dedekind's
  approach leads to a nice and general algebraic theory described in many
  modern textbooks.
  On the other hand, the language of ideal classes is not adapted
  for non-commutative generalizations.
\end{rem}

\section{Discriminants}
In this section, we discuss the invariant meaning of discriminants and
prove the ``only if'' part of Theorem B.

For a pair of vectors $\xb=(x,y)$ and $\xb'=(x',y')$, we write
$\det(\xb,\xb')$ for the determinant of the matrix
$$
\begin{pmatrix}
  x&x'\\y&y'
\end{pmatrix}.
$$
It is clear that the expression $\det(\xb,\yb)$ is (almost) invariant under the
action of $GL_2(\Z)$: if $A\in GL_2(\Z)$ is an automorphism of $\Z^2$, then
$$
\det(A\xb,A\yb)=\det(A)\det(\xb,\yb).
$$
Note that $\det(A)=\pm 1$.
In fact, the formula displayed above holds under a more general assumption
that $A:\Z^2\to\Z^2$ is any linear map (i.e. a $\Z$-module homomorphism).
We start with the following general statement from the theory of symmetric polynomials

\begin{lem}
  \label{l:sym}
  Let $Q:\C^2\times\C^2\to\C$ be a bi-quadratic polynomial with the
  following properties:
  $$
  Q(\xb,\yb)=Q(\yb,\xb),\quad Q(\xb,\xb)=0\quad\forall\ \xb,\,\yb\in\C^2.
  $$
  Then there is a number $c$ such that $Q(\xb,\yb)=c\,\det(\xb,\yb)^2$.
\end{lem}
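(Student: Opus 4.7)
The plan is to show that $\det(\xb,\yb)$ divides $Q(\xb,\yb)$ as a polynomial in four variables, and then to pin down the cofactor using the symmetry hypothesis.

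First I will verify that $Q$ vanishes on the hypersurface $\{\det(\xb,\yb)=0\}$, which consists of pairs $(\xb,\yb)$ with $\xb$ and $\yb$ linearly dependent. Indeed, if $\xb=\lambda\yb$, then bi-quadraticity in the first argument together with the hypothesis $Q(\yb,\yb)=0$ gives
$$
Q(\lambda\yb,\yb)=\lambda^2 Q(\yb,\yb)=0,
$$
and the case $\yb=\lambda\xb$ is symmetric. Since $\det(\xb,\yb)=x_1y_2-x_2y_1$ is irreducible in the UFD $\C[x_1,x_2,y_1,y_2]$, a standard application of the Hilbert Nullstellensatz (or a direct factorization argument) implies $\det(\xb,\yb)\mid Q(\xb,\yb)$. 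Comparing bi-degrees, the cofactor $P(\xb,\yb)=Q(\xb,\yb)/\det(\xb,\yb)$ has bi-degree $(1,1)$, i.e. it is a bilinear form on $\C^2\times\C^2$.

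Next I will exploit the symmetry. From $Q(\xb,\yb)=Q(\yb,\xb)$ combined with $\det(\yb,\xb)=-\det(\xb,\yb)$, one reads off $P(\xb,\yb)=-P(\yb,\xb)$, so $P$ is antisymmetric. The space of antisymmetric bilinear forms on $\C^2$ is one-dimensional and spanned by $\det$, so $P(\xb,\yb)=c\det(\xb,\yb)$ for some scalar $c\in\C$, yielding $Q(\xb,\yb)=c\det(\xb,\yb)^2$ as desired.

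The only delicate point will be justifying the divisibility step cleanly. If one prefers to avoid the Nullstellensatz, the elementary alternative is to observe that for each fixed $\yb\neq 0$ the quadratic form $\xb\mapsto Q(\xb,\yb)$ on $\C^2$ has the line $\C\cdot\yb$ in its zero set, hence factors as $\det(\xb,\yb)\,L_{\yb}(\xb)$ with $L_{\yb}$ linear, and then one checks that the coefficients of $L_{\yb}$ depend polynomially on $\yb$. A purely computational fallback is to expand $Q$ in the nine monomials of bi-degree $(2,2)$: the symmetry relation identifies three pairs of coefficients, and the vanishing of $Q(\xb,\xb)$ imposes five independent linear equations, which together cut the space of admissible $Q$ down to the line spanned by $\det(\xb,\yb)^2$.
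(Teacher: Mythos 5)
Your proof is correct and follows essentially the same route as the paper's sketch: establish that $\det(\xb,\yb)$ divides $Q(\xb,\yb)$, then observe that the bilinear cofactor is skew-symmetric and hence a multiple of $\det(\xb,\yb)$. The only difference is cosmetic — you justify the divisibility via irreducibility and the Nullstellensatz, while the paper factors the quadratic form $\xb\mapsto Q(\xb,\yb)$ for fixed $\yb$, a variant you yourself note as the elementary alternative.
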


\begin{proof}[Sketch of a proof]
  The lemma can be proved by a straightforward computation.
Alternatively, one can observe that, for every fixed $\yb$, the
polynomial $Q(\xb,\yb)$ (regarded as a quadratic form in $\xb$) vanishes
on the line given by the equation $\det(\xb,\yb)=0$.
One can conclude that $Q(\xb,\yb)$ is divisible by $\det(\xb,\yb)$ in the
ring of polynomials in $\xb$ (whose coefficients may depend on $\yb$).
Since the ratio $Q(\xb,\yb)/\det(\xb,\yb)$ is a skew-symmetric function
of $\xb$ and $\yb$, it depends polynomially (in fact, linearly) both on $\xb$
and on $\yb$. A skew-symmetric bilinear function of $\xb$ and $\yb$ must
have the form $c\,\det(\xb,\yb)$ for some constant $c$.
\end{proof}

The next proposition provides an invariant meaning of the
discriminant of a quadratic form.

\begin{prop}
  \label{p:det}
  Let $f$ be a binary quadratic form.
  Then, for every $\xb$ and $\yb\in\Z^2$, we have
  $$
  4(f(\xb,\yb)^2-f(\xb)f(\yb))=d(f)\det(\xb,\yb)^2.
  $$
\end{prop}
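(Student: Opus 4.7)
The plan is to reduce the identity to Lemma \ref{l:sym}. Define the function
$$
Q(\xb,\yb) = 4\bigl(f(\xb,\yb)^2 - f(\xb)f(\yb)\bigr).
$$
Although the proposition is stated over $\Z^2$, the form $f=(a,b,c)$ is the restriction of a polynomial that makes sense over $\C^2$, and so does its polarization; I will work with this polynomial extension, then specialize to integer vectors at the end. Writing things out, $f(\xb,\yb)$ is bilinear in $(\xb,\yb)$, so $f(\xb,\yb)^2$ is bi-quadratic; similarly $f(\xb)f(\yb)$ is bi-quadratic. Hence $Q$ is a bi-quadratic polynomial on $\C^2\times\C^2$.

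Next I would verify the hypotheses of Lemma \ref{l:sym}. Symmetry $Q(\xb,\yb)=Q(\yb,\xb)$ follows from the symmetry of the polarization $f(\xb,\yb)=f(\yb,\xb)$ and the obvious commutativity of $f(\xb)f(\yb)$. Vanishing on the diagonal is immediate: $f(\xb,\xb)=f(\xb)$, so $Q(\xb,\xb) = 4(f(\xb)^2 - f(\xb)^2) = 0$. By the lemma, there is a constant $c$ (depending only on $f$) such that
$$
Q(\xb,\yb) = c\,\det(\xb,\yb)^2
$$
for all $\xb,\yb\in\C^2$, hence in particular for all $\xb,\yb\in\Z^2$.

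It remains to identify $c$ with $d(f)=b^2-4ac$. I would evaluate both sides on the standard basis $\xb=(1,0)$, $\yb=(0,1)$, for which $\det(\xb,\yb)=1$. With $f=(a,b,c)$ one has $f(\xb)=a$, $f(\yb)=c$, and the polarization evaluates to $f(\xb,\yb)=b/2$ (the mixed coefficient in $axx'+\frac{b}{2}(xy'+x'y)+cyy'$). Therefore
$$
Q(\xb,\yb) = 4\!\left(\tfrac{b^2}{4} - ac\right) = b^2-4ac = d(f),
$$
so $c=d(f)$, which is the desired identity.

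There is no real obstacle here; the only mild point to be careful about is that Lemma \ref{l:sym} is stated for complex polynomials, so one has to observe that $f$, being defined by a polynomial formula, extends from $\Z^2$ to $\C^2$, and that once the polynomial identity $Q=d(f)\det^2$ is established over $\C^2$ it automatically holds on $\Z^2\subset\C^2$.
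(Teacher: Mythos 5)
Your proof is correct and follows exactly the paper's own argument: treat the left-hand side as a bi-quadratic function, apply Lemma \ref{l:sym} to get proportionality to $\det(\xb,\yb)^2$, and evaluate on the standard basis to identify the constant as $d(f)$. The extra remark about extending from $\Z^2$ to $\C^2$ is a sensible point of care that the paper leaves implicit.
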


\begin{proof}
Consider the left-hand side as a bi-quadratic function
$Q(\xb,\yb)$ of $\xb$ and $\yb$.
This function satisfies the assumptions of Lemma \ref{l:sym}.
Therefore, we have $Q(\xb,\yb)=d\,\det(\xb,\yb)^2$, where the number $d$
depends only on $f$ but not on $\xb$ or $\yb$.
Substituting $\xb=(1,0)$ and $\yb=(0,1)$, we obtain that
$d=d(f)$, as desired.
\end{proof}

\begin{cor}
  \label{c:det-inv}
  Suppose that $A:\Z^2\to\Z^2$ is a linear map, and quadratic forms
  $f$, $f'$ satisfy the identity $f'(\xb)=f(A\xb)$ for all $\xb\in\Z^2$.
  Then $d(f')=d(f)\det(A)^2$.
  In particular, the discriminant of a quadratic form is an invariant:
  if $f$ and $f'$ are equivalent forms, then $d(f)=d(f')$.
\end{cor}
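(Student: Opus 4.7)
The plan is to derive the corollary directly from Proposition \ref{p:det}, which supplies a coordinate-free expression for $d(f)$ in terms of values and inner products of $f$. The core observation is that the identity $f'(\xb) = f(A\xb)$ forces a parallel identity on polarizations, namely $f'(\xb,\yb) = f(A\xb, A\yb)$. This follows from the standard polarization formula $2f'(\xb,\yb) = f'(\xb+\yb) - f'(\xb) - f'(\yb)$ (which uniquely reconstructs the symmetric bilinear form from the quadratic form), applied together with the linearity of $A$, which gives $A(\xb+\yb) = A\xb + A\yb$.

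With this in hand, I would apply Proposition \ref{p:det} to $f'$ at a pair of vectors $\xb, \yb \in \Z^2$, obtaining
\[
d(f')\det(\xb,\yb)^2 = 4\bigl(f'(\xb,\yb)^2 - f'(\xb)f'(\yb)\bigr) = 4\bigl(f(A\xb, A\yb)^2 - f(A\xb)f(A\yb)\bigr),
\]
and then use Proposition \ref{p:det} for $f$ evaluated at $A\xb$ and $A\yb$ to rewrite the right-hand side as $d(f)\det(A\xb, A\yb)^2$. Combined with the multiplicativity $\det(A\xb, A\yb) = \det(A)\det(\xb,\yb)$ recalled at the start of the section, this yields
\[
d(f')\det(\xb,\yb)^2 = d(f)\det(A)^2 \det(\xb,\yb)^2.
\]
Specializing to $\xb = (1,0)$ and $\yb = (0,1)$, so that $\det(\xb,\yb) = 1$, gives $d(f') = d(f)\det(A)^2$.

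The ``in particular'' statement is then immediate: when $A \in GL_2(\Z)$ exhibits an equivalence between $f$ and $f'$, one has $\det(A) = \pm 1$, hence $\det(A)^2 = 1$ and $d(f) = d(f')$. I do not anticipate any serious obstacle; the only point that deserves a line of justification is the transfer of the polarization under $A$, and this is simply the uniqueness of the polarization of a quadratic form together with $\Z$-linearity of $A$.
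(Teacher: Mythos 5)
Your proof is correct and follows essentially the same route as the paper: both apply Proposition \ref{p:det} to $f'$ and to $f$ at $A\xb$, $A\yb$, and use the multiplicativity $\det(A\xb,A\yb)=\det(A)\det(\xb,\yb)$ to compare the two expressions. Your explicit justification that $f'(\xb,\yb)=f(A\xb,A\yb)$ via uniqueness of the polarization is a point the paper passes over silently, and it is a welcome addition.
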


\begin{proof}
  On the one hand, we have
  $4(f'(\xb,\yb)-f'(\xb)f'(\yb))=d(f')\det(\xb,\yb)^2$
  by Proposition \ref{p:det}.
  On the other hand, the same bi-quadratic form of $\xb$ and $\yb$ is equal to
  $$
  4(f(A\xb,A\yb)^2-f(A\xb)f(A\yb))=d(f)\det(A\xb,A\yb)^2=
  d(f)\det(A)^2\det(\xb,\yb),
  $$
  again by Proposition \ref{p:det}.
  Comparing the right-hand sides, we conclude that $d(f')=d(f)\det(A)^2$,
  as desired.
\end{proof}

We will also need the following proposition essentially due to Lagrange:

\begin{prop}
  \label{p:lagr}
    Consider vectors $\ab$, $\bb$, $\cb$, $\db\in\C^2$ and a quadratic form $F$ on $\C^2$.
  Then we have
  $$
  4\det
  \begin{pmatrix}
    F(\ab,\cb)& F(\ab,\db)\\
    F(\bb,\cb)& F(\bb,\db)
  \end{pmatrix}=-d(F)\det(\ab,\bb)\det(\cb,\db).
  $$
\end{prop}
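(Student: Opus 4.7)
The plan is to follow the strategy used for Lemma~\ref{l:sym} and Proposition~\ref{p:det}: coerce the left-hand side into a universal multilinear shape, then pin down a single constant. Denote the left-hand side of the proposition by $\Phi(\ab,\bb,\cb,\db)$. Since the polarization $F(\cdot,\cdot)$ is bilinear, each entry of the $2\times 2$ matrix is linear in its two vector arguments, so $\Phi$ is multilinear in the four vectors $\ab$, $\bb$, $\cb$, $\db$.

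Next I would record two vanishings: when $\ab=\bb$ the two rows of the matrix coincide and $\Phi=0$, and when $\cb=\db$ the two columns coincide, again giving $\Phi=0$. Expanding $\Phi(\ab+\bb,\ab+\bb,\cb,\db)=0$ by multilinearity yields $\Phi(\ab,\bb,\cb,\db)+\Phi(\bb,\ab,\cb,\db)=0$, i.e.\ $\Phi$ is skew-symmetric in $(\ab,\bb)$, and similarly in $(\cb,\db)$. Because the space of bilinear skew-symmetric forms on $\C^2\times\C^2$ is one-dimensional and is spanned by $\det$, applying this fact to each skew pair in turn forces
$$
\Phi(\ab,\bb,\cb,\db)=C\det(\ab,\bb)\det(\cb,\db)
$$
for a scalar $C$ depending only on $F$.

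To identify $C$, I would specialize to the diagonal case $\ab=\cb=\xb$, $\bb=\db=\yb$. Then $\Phi$ becomes $4(F(\xb)F(\yb)-F(\xb,\yb)^2)$, while $\det(\ab,\bb)\det(\cb,\db)=\det(\xb,\yb)^2$. Proposition~\ref{p:det} identifies $4(F(\xb,\yb)^2-F(\xb)F(\yb))$ with $d(F)\det(\xb,\yb)^2$, so $C=-d(F)$, which is the desired identity.

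In effect the whole statement is a polarization of Proposition~\ref{p:det}, so there is no serious obstacle; the only point demanding care is the passage from multilinearity and the two ``equal-row'' and ``equal-column'' vanishings to genuine skew-symmetry in the respective pairs, which is exactly the one-line expansion of $\Phi(\ab+\bb,\ab+\bb,\cb,\db)$ displayed above.
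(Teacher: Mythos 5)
Your proposal is correct and follows essentially the same route as the paper: both establish multilinearity and skew-symmetry in the pairs $(\ab,\bb)$ and $(\cb,\db)$, deduce that the left-hand side is a constant multiple of $\det(\ab,\bb)\det(\cb,\db)$, and then evaluate the constant on a diagonal specialization. The only cosmetic differences are that the paper reads off skew-symmetry directly from swapping rows (rather than polarizing the equal-row vanishing) and computes the constant by plugging in the standard basis vectors instead of citing Proposition~\ref{p:det}.
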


\begin{proof}
  Note that the left-hand side depends multi-linearly on $\ab$, $\bb$, $\cb$, $\db$.
  Note also that it is skew-symmetric with respect to $\ab$, $\bb$.
  It is also skew-symmetric with respect to $\cb$, $\db$.
  It follows that the left-hand side is equal to $\det(\ab,\bb)\det(\cb,\db)$
  times some constant that depends only on $F$.
  This constant can be computed by setting $\ab=\cb=(1,0)$ and
  $\bb=\db=(0,1)$.
  If $F=(A,B,C)$, then the determinant in the left-hand side is equal to
  $$
  4\det
  \begin{pmatrix}
    A& B/2\\B/2& C
  \end{pmatrix}=-d(F).
  $$
  The product $\det(\ab,\bb)\det(\cb,\db)$ is equal to 1.
\end{proof}

In the rest of this paper, we fix integer forms $f$, $f'$, and a form $F$
composed of $f$ and $f'$ with the help of a composition law $\circ$.
Recall formula (\ref{e:comp}):
$$
F(\xb\circ\xb')=f(\xb)f'(\xb').\leqno{(\ref{e:comp})}
$$
Let us fix $\xb'$ and consider both sides of this formula as quadratic forms in $\xb$.
Compute the discriminants of both sides.
The discriminant of the left-hand side is equal to the discriminant of $F$
times the square of the determinant of the linear map $\circ\xb'$ that
takes $\xb$ to $\xb\circ\xb'$.
To compute the discriminant of the right-hand side, it suffices to observe
that $d(\lambda f)=\lambda^2 d(f)$ for every scalar factor $\lambda$.
Thus we have
\begin{equation}
\label{e:circy}
  d(F)\det(\circ\xb')^2=d(f) f'(\xb')^2.
\end{equation}

Similarly, if we fix $\xb$ and consider both sides of equation (\ref{e:comp})
as quadratic forms in $\xb'$, then we obtain the equality
\begin{equation}
\label{e:xcirc}
  d(F)\det(\xb\circ)^2=d(f')f(\xb)^2,
\end{equation}
where the linear map $\xb\circ:\Z^2\to\Z^2$ takes $\xb'$ to $\xb\circ\xb'$.
Equations (\ref{e:circy}) and (\ref{e:xcirc}) imply the following proposition.

\begin{prop}
  \label{p:onlyif}
  Suppose that a quadratic form $F$ is composed of integer quadratic forms
  $f$ and $f'$ by means of a composition law $\circ$.
  Then
  \begin{enumerate}
    \item the determinant $d(F)$ is a rational number;
    \item the ratio $d(f)/d(f')$ is a square of a rational number;
    \item the quadratic forms $f(\xb)$, $f'(\xb')$ are proportional, respectively,
    to $\det(\xb\circ)$ and $\det(\circ\xb')$:
    $$
    \det(\xb\circ)=\nu f(\xb),\quad \det(\circ\xb')=\nu' f'(\xb').
    $$
    \item the three determinants $d(F)$, $d(f)$, $d(f')$ are related as follows:
    $$
    d(f')=\nu^2 d(F),\quad d(f)={\nu'}^2 d(F).
    $$
  \end{enumerate}
\end{prop}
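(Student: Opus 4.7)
The plan is to extract all four conclusions from the identities (\ref{e:circy}) and (\ref{e:xcirc}) already derived above, establishing part (3) first and deducing the other parts from it.

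I would begin with (3). By bilinearity of $\circ$, the matrix of $\circ\xb':\xb\mapsto\xb\circ\xb'$ has entries linear in $\xb'$, so $\det(\circ\xb')$ is a homogeneous quadratic polynomial in $\xb'$; likewise $\det(\xb\circ)$ is quadratic in $\xb$. The identity (\ref{e:circy}) is therefore an equality of two homogeneous polynomials of degree $4$ in $\xb'$. Writing $P=\det(\circ\xb')$ and $Q=f'(\xb')$, I would factor the relation $d(F)\,P^{2}=d(f)\,Q^{2}$ in the UFD $\C[\xb']$ as
$$
\bigl(\sqrt{d(F)}\,P-\sqrt{d(f)}\,Q\bigr)\bigl(\sqrt{d(F)}\,P+\sqrt{d(f)}\,Q\bigr)=0,
$$
and conclude that one of the two factors vanishes identically. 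This yields $\det(\circ\xb')=\nu' f'(\xb')$ for some $\nu'\in\C$, and since both polynomials have integer coefficients and $Q\not\equiv 0$, comparing any nonzero coefficient forces $\nu'\in\Q$. The symmetric argument applied to (\ref{e:xcirc}) delivers $\det(\xb\circ)=\nu f(\xb)$ with $\nu\in\Q$.

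Substituting these proportionalities back into (\ref{e:circy}) and (\ref{e:xcirc}) and cancelling the nonvanishing polynomials $f'(\xb')^{2}$ and $f(\xb)^{2}$ gives (4): $d(f)=(\nu')^{2}d(F)$ and $d(f')=\nu^{2}d(F)$. From either of these relations, $d(F)$ is an integer divided by a nonzero rational square and is therefore rational, which is (1); dividing one relation by the other produces $d(f)/d(f')=(\nu'/\nu)^{2}$, which is (2).

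The main obstacle is the UFD step in (3): passing from proportionality of the squares of two polynomials to proportionality of the polynomials themselves requires unique factorization in the polynomial ring together with the extraction of square roots in $\C$. Genuinely degenerate situations, in which $f$ or $f'$ is identically zero (whence $F\equiv 0$ by the spanning condition on the image of $\circ$), reduce every assertion of the proposition to a triviality and can be dispatched by a separate, essentially tautological, check.
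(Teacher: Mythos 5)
Your argument is correct and follows the same route the paper intends: the paper offers no written proof beyond asserting that equations (\ref{e:circy}) and (\ref{e:xcirc}) imply the proposition, and your factorization in the polynomial ring supplies exactly the missing step, namely that proportionality of the squares forces proportionality of the quadratic polynomials themselves, after which parts (4), (1), (2) follow by substitution just as you say. The only caveat --- shared with the paper, which does not address it --- is that the factorization yields no conclusion when $d(F)=0$ (which by (\ref{e:circy}) forces $d(f)=0$ as well when $f'\not\equiv 0$), a degenerate case not covered by the ``identically zero'' situation you dispatch separately.
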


In particular, this proves the ``only if'' part of Theorem B.

\begin{rem}[Direct composition and uniqueness]
  The composition $\circ$ is said to be \emph{direct} if both $\nu$ and $\nu'$ are positive.
  Different direct composition laws applied to the same pair of integer quadratic forms $f$, $f'$
  give rise to equivalent forms $F$.
  Moreover, the composition laws themselves are equivalent in a natural sense.
  A proof of this statement and some generalizations of it can be found in \cite{DB}.
  Unfortunately, an analog of this statement fails in higher dimensions, e.g., in
  dimension 4.
\end{rem}

\section{Integrality}
Recall that we fixed integer forms $f$, $f'$, a form $F$ composed
of $f$, $f'$ (a priori not integer), and the corresponding composition law $\circ$.
In this section, we prove Theorem A.

Identity (\ref{e:comp}) can be rewritten as follows
$$
F(\xb\circ\yb,\xb\circ\yb)=f(\xb,\xb)f'(\yb,\yb).
$$
Note that both sides of this identity are bi-quadratic
functions of $\xb$ and $\yb$.
The corresponding identity on the associated bi-symmetric 4-linear functions looks as follows:
\begin{equation}
\label{e:mix}
F(\xb\circ\yb,\xb'\circ\yb')+F(\xb\circ\yb',\xb'\circ\yb)=
2f(\xb,\xb')f'(\yb,\yb').
\end{equation}
To see how equation (\ref{e:mix}) follows from (\ref{e:comp}), observe that
both sides of (\ref{e:mix}) are symmetric bilinear functions of $\xb$, $\xb'$
and symmetric bilinear functions of $\yb$, $\yb'$.
Two such functions are equal if and only if they are equal for $\xb=\xb'$
and $\yb=\yb'$.

We now set $\yb=\xb$ and $\yb'=\xb'$.
Making these substitutions into (\ref{e:mix}), we obtain
\begin{equation}
\label{e:mix1}
F(\xb\circ\xb,\xb'\circ\xb')+F(\xb\circ\xb',\xb'\circ\xb)=
2f(\xb,\xb')f'(\xb,\xb').
\end{equation}

Consider the following bi-quadratic form
\begin{equation}
\label{e:Q}
Q(\xb,\yb)=\frac 12\left(F(\xb\circ\xb,\yb\circ\yb)-F(\xb\circ\yb,\yb\circ\xb)
\right).
\end{equation}
Observe that $Q$ satisfies the assumption of the Lemma \ref{l:sym}.
Therefore, we have $Q(\xb,\yb)=\Delta\det(\xb,\yb)^2$ for some constant
$\Delta$ that only depends on $F$, $f$, $f'$ and $\circ$.
In this section, we find an expression for the number $\Delta$.
Using equation (\ref{e:mix1}), we can give two
formulas involving $\Delta$:
\begin{equation}
\label{e:Del1}
F(\xb\circ\xb,\yb\circ\yb)=f(\xb,\yb)f'(\xb,\yb)+\Delta\det(\xb,\yb)^2.
\end{equation}
\begin{equation}
\label{e:Del2}
F(\xb\circ\yb,\yb\circ\xb)=f(\xb,\yb)f'(\xb,\yb)-\Delta\det(\xb,\yb)^2.
\end{equation}

Let us now introduce some short-hand notation that will be used in the
rest of the paper.
We will write $D$, $d$ and $d'$ for the discriminants of $F$, $f$ and $f'$,
respectively.
We will also assume that $f=(a,b,c)$, $f'=(a',b',c')$ and $F=(A,B,C)$.
The numbers $a$, $b$, $c$, $a'$, $b'$, $c'$ are integer.

\begin{prop}
  \label{p:Del}
  The number $4\Delta$ is equal to $D\nu\nu'$, hence $16\Delta^2=dd'$.
  In particular, the number $4\Delta$ is integer.
\end{prop}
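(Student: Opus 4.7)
The plan is to compute the polynomial $X_1^2 - X_2^2$ in two different ways, where $X_1 := \det(\xb\circ\xb, \yb\circ\yb)$ and $X_2 := \det(\xb\circ\yb, \yb\circ\xb)$. For the first route, I would polarize the identities $\det(\xb\circ) = \nu f(\xb)$ and $\det(\circ\xb) = \nu' f'(\xb)$ from Proposition \ref{p:onlyif}. Fixing a basis $\eb_1, \eb_2$ of $\Z^2$ and using $\det(A+B) - \det A - \det B = \det(A_1, B_2) + \det(B_1, A_2)$ for the columns of $2\times 2$ matrices, polarizing gives
$$
\det(\xb\circ\eb_1, \yb\circ\eb_2) + \det(\yb\circ\eb_1, \xb\circ\eb_2) = 2\nu f(\xb,\yb),
$$
and analogously with $\eb_j\circ$ and $\nu' f'(\xb,\yb)$ for the second identity. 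Expanding $\xb\circ\xb = x_1\xb\circ\eb_1 + x_2\xb\circ\eb_2$ (bilinearity of $\circ$ in the second slot) and similarly for $\yb\circ\yb, \xb\circ\yb, \yb\circ\xb$, then using the above polarized identity together with $\det(\xb\circ\eb_j, \yb\circ\eb_j) = \nu' f'(\eb_j)\det(\xb,\yb)$ (images of $\xb, \yb$ under $\circ\eb_j$), a routine bookkeeping yields $X_1 - X_2 = 2\nu\det(\xb,\yb)f(\xb,\yb)$. Repeating with expansions via the first slot symmetrically gives $X_1 + X_2 = 2\nu'\det(\xb,\yb)f'(\xb,\yb)$. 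Multiplying,
$$
X_1^2 - X_2^2 = 4\nu\nu'\det(\xb,\yb)^2 f(\xb,\yb)f'(\xb,\yb).
$$

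For the second route, I would apply Proposition \ref{p:det} to $F$ with the vector pairs $(\xb\circ\xb, \yb\circ\yb)$ and $(\xb\circ\yb, \yb\circ\xb)$. Since $F(\xb\circ\xb)F(\yb\circ\yb) = f(\xb)f'(\xb)f(\yb)f'(\yb) = F(\xb\circ\yb)F(\yb\circ\xb)$, subtracting the two applications yields
$$
4\bigl[F(\xb\circ\xb,\yb\circ\yb)^2 - F(\xb\circ\yb,\yb\circ\xb)^2\bigr] = D(X_1^2 - X_2^2).
$$
By (\ref{e:Del1}), (\ref{e:Del2}), and $(A+B)^2 - (A-B)^2 = 4AB$, the left-hand side equals $16\Delta\det(\xb,\yb)^2 f(\xb,\yb)f'(\xb,\yb)$. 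Substituting the first-route expression for $X_1^2 - X_2^2$ and cancelling the common (not identically zero, since $f$ and $f'$ are nonzero) polynomial factor gives $4\Delta = D\nu\nu'$. Squaring and using $d = D(\nu')^2$ and $d' = D\nu^2$ from Proposition \ref{p:onlyif} yields $16\Delta^2 = dd'$; the integrality of $4\Delta$ then follows from its rationality (evident from (\ref{e:Q}) and the fact that polarizations of $F$ on $\xb\circ\xb'$-type vectors take half-integer values) combined with $dd'\in\Z$.

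The main obstacle is the bookkeeping in deriving the $X_1 \pm X_2$ formulas: the argument combines the bilinearity of $\circ$ (in its two slots, separately) with the polarized determinant identities, and the signs and symmetries must be tracked carefully to obtain the clean factorizations that make the rest of the proof go through.
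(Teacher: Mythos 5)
Your proof is correct, but it takes a genuinely different route from the paper's. The paper makes a single asymmetric substitution $\ab=\cb=\xb\circ\xb$, $\bb=\xb\circ\yb$, $\db=\yb\circ\xb$ into the full Lagrange identity of Proposition \ref{p:lagr}, which (combined with (\ref{e:mix}) and (\ref{e:Del2})) collapses immediately to $4f(\xb)f'(\xb)\Delta=D\det(\xb\circ)\det(\circ\xb)$, whence $4\Delta=D\nu\nu'$ by Proposition \ref{p:onlyif}(3). You instead use only the diagonal case of Lagrange, namely Proposition \ref{p:det}, applied twice and subtracted; the price is the extra computation of $X_1\pm X_2$ via polarization of $\det(\xb\circ)=\nu f(\xb)$ and $\det(\circ\xb')=\nu' f'(\xb')$ in the off-diagonal slots. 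I checked those identities ($X_1-X_2=2\nu f(\xb,\yb)\det(\xb,\yb)$ and $X_1+X_2=2\nu' f'(\xb,\yb)\det(\xb,\yb)$) and they are correct; they are pleasant byproducts not appearing in the paper, closely analogous in spirit to the paper's pair (\ref{e:Del1})--(\ref{e:Del2}). The paper's argument is shorter and avoids the bookkeeping you flag as the main obstacle; yours avoids invoking the four-vector Lagrange identity at the cost of that bookkeeping. Two small points: cancelling the polynomial factor $f(\xb,\yb)f'(\xb,\yb)\det(\xb,\yb)^2$ requires $f$, $f'$ to be nonzero forms, but the paper's own proof divides by $f(\xb)f'(\xb)$ under the same implicit assumption, so this is not a gap; and for the final integrality step you need not appeal to (\ref{e:Q}) at all --- once $4\Delta=D\nu\nu'$ is established, rationality is immediate from parts (1) and (3) of Proposition \ref{p:onlyif}, exactly as in the remark following the statement.
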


Note that the numbers $\nu$, $\nu'$ are a priori only rational, not necessarily
integer.
However, the rational number $D\nu\nu'$ is a square root of the integer $dd'$,
hence it is also an integer.

\begin{proof}
In Proposition \ref{p:lagr}, we set
$\ab=\cb=\xb\circ\xb$, $\bb=\xb\circ\yb$, $\db=\yb\circ\xb$.
Then the left-hand side of the formula is
\begin{align*}
4(F(\xb\circ\xb,\xb\circ\xb)F(\xb\circ\yb,\yb\circ\xb)-
F(\xb\circ\xb,\yb\circ\xb)F(\xb\circ\yb,\xb\circ\xb))=\\
=4f(\xb)f'(\xb)(f(\xb,\yb)f'(\xb,\yb)-\Delta\det(\xb,\yb)^2)-
4f(\xb,\yb)f'(\xb)f(\xb)f'(\xb,\yb)=\\
=-4f(\xb)f'(\xb)\Delta\det(\xb,\yb)^2.
\end{align*}
The right-hand side of the formula is equal to
$-D\det(\xb\circ)\det(\circ\xb)\det(\xb,\yb)^2$.
Thus we obtain that $4f(\xb)f'(\xb)\Delta=D\det(\xb\circ)\det(\circ\xb)$.
From Proposition \ref{p:onlyif}, part (3), we conclude that
$$
4\Delta=D\nu\nu',
$$
as desired.
The formula $16\Delta^2=dd'$ now follows from part (4) of Proposition \ref{p:onlyif}.
\end{proof}

\begin{prop}
  \label{p:int}
  The quadratic form $F$ is integer.
\end{prop}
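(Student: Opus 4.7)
The plan is to exploit the spanning assumption: since $S = \{\xb \circ \xb' : \xb, \xb' \in \Z^2\}$ spans $\Z^2$, every $\zb \in \Z^2$ can be written as $\zb = \sum_i m_i v_i$ with $v_i \in S$, $m_i \in \Z$, whence
$$
F(\zb) = \sum_i m_i^2 F(v_i) + \sum_{i<j} m_i m_j \cdot 2 F(v_i, v_j).
$$
Since $F(v_i) = f(\xb_i) f'(\xb_i') \in \Z$ by (\ref{e:comp}), it suffices to prove $2 F(v, w) \in \Z$ for all $v$, $w \in S$.

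To evaluate $F$ on such pairs, I would first polarize (\ref{e:Del1}) to obtain the four-linear identity
$$
F(\xb \circ \yb, \xb' \circ \yb') = f(\xb, \xb') f'(\yb, \yb') + \Delta \det(\xb, \xb') \det(\yb, \yb').
$$
Indeed, the left-hand side $G(\xb, \xb'; \yb, \yb')$ is bilinear in $(\xb, \xb')$ and in $(\yb, \yb')$; by symmetry of $F$ the swap $\xb \leftrightarrow \xb'$ produces the same effect as $\yb \leftrightarrow \yb'$, so the symmetric part of $G$ equals $f(\xb, \xb') f'(\yb, \yb')$ by (\ref{e:mix}), while the antisymmetric part is bilinear and antisymmetric in both pairs, hence a constant multiple of $\det(\xb, \xb') \det(\yb, \yb')$; specializing to the standard basis and invoking (\ref{e:Del1}) and (\ref{e:Del2}) pins the constant as $\Delta$. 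Multiplying by $4$,
$$
4 F(\xb \circ \yb, \xb' \circ \yb') = (2 f(\xb, \xb'))(2 f'(\yb, \yb')) + (4\Delta) \det(\xb, \xb') \det(\yb, \yb') \in \Z,
$$
using $2 f(\cdot, \cdot), 2 f'(\cdot, \cdot) \in \Z$ and $4\Delta \in \Z$ (Proposition \ref{p:Del}).

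The main obstacle is to promote $4 F(v, w) \in \Z$ to $2 F(v, w) \in \Z$, for which I would prove the parity lemma $4\Delta \equiv bb' \pmod 2$. From $(4\Delta)^2 = dd'$ (Proposition \ref{p:Del}) and the congruences $d = b^2 - 4ac \equiv b \pmod 2$, $d' \equiv b' \pmod 2$ (using $n^2 \equiv n \pmod 2$), one obtains $(4\Delta)^2 \equiv bb' \pmod 2$, and another application of $n^2 \equiv n \pmod 2$ gives the lemma. Finally, reducing modulo $2$, we have $2 f(\xb, \xb') \equiv b(x_1 x_2' + x_2 x_1')$ and $\det(\xb, \xb') \equiv x_1 x_2' + x_2 x_1'$ (analogously for the primed pair), so the expression for $4 F(v, w)$ reduces modulo $2$ to $(bb' + 4 \Delta)$ times an integer, which vanishes by the parity lemma. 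Hence $2 F(v, w) \in \Z$, and $F$ is integer.
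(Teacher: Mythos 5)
Your proof is correct and follows essentially the same route as the paper: reduce integrality to showing $2F(v,w)\in\Z$ for products $v,w$ (which suffices since the products span $\Z^2$), use the identity underlying (\ref{e:Del1})--(\ref{e:Del2}) together with $4\Delta\in\Z$ from Proposition \ref{p:Del}, and finish with the parity observation that $4\Delta\equiv bb'\pmod 2$. The only cosmetic difference is that you state the fully polarized four-linear identity and run a single mod-$2$ computation, where the paper specializes to the standard basis vectors and checks the six doubled inner products case by case.
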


\begin{proof}
It follows from formula (\ref{e:comp}) that the values of $F$ at all
vectors of the form $\xb\circ\yb$ are integers.
Since the ``products'' $\xb\circ\yb$ generate the lattice $\Z^2$, it suffices
to prove that the numbers $2F(\xb\circ\yb,\xb'\circ\yb')$ are integer for
any vectors $\xb$, $\yb$, $\xb'$, $\yb'\in\Z^2$.
We may assume that each of the vectors $\xb$, $\yb$, $\xb'$, $\yb'$
is equal to $\eb_1=(1,0)$ or $\eb_2=(0,1)$.
Thus, among these four vectors, only two are different
(if all four vectors are equal, then the statement is obvious).
Set $\eb_{ij}=\eb_i\circ\eb_j$.
There are 6 inner products we need to check, namely,
$2F(\eb_{ij},\eb_{i',j'})$ for $(i,j)\ne (i',j')$, and verify that they
all are integer.

The four numbers
$$
2F(\eb_{i1},\eb_{i2})=f(\eb_i)b',\quad 2F(\eb_{1j},\eb_{2j})=bf'(\eb_j),\quad i,j=1,2
$$
are integer.
The remaining two numbers are equal to
$$
2F(\eb_{12},\eb_{21})=\frac 12bb'-2\Delta,\quad 2F(\eb_{11},\eb_{22})=\frac 12bb'+2\Delta
$$
by formulas (\ref{e:Del1}) and (\ref{e:Del2}).
Note that, if both $d$, $d'$ are odd (equivalently, both $b$, $b'$ are odd), then
both $\frac 12bb'$ and $2\Delta$ are half-integers.
If at least one of the numbers $d$, $d'$ is even (equivalently, at least one of
the numbers $b$, $b'$ is even), then both $\frac 12bb'$ and $2\Delta$
are integers.
In both cases, the numbers $2F(\eb_{12},\eb_{21})$ and $2F(\eb_{11},\eb_{22})$
are integers.
\end{proof}

We can now prove the part of Theorem A dealing with $\delta(F)$.

\begin{prop}
  \label{p:delta}
  We have $\delta(F)=\delta(f)\delta(f')$.
\end{prop}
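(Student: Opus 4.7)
I would prove the two divisibilities $\delta(F)\mid\delta(f)\delta(f')$ and $\delta(f)\delta(f')\mid\delta(F)$ separately, and conclude equality.

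For the first divisibility, the composition identity (\ref{e:comp}) gives $\delta(F)\mid f(\xb)f'(\xb')$ for every $\xb,\xb'\in\Z^2$. A two-step B\'ezout argument then yields $\delta(F)\mid\delta(f)\delta(f')$: combining integer multiples of the values $f'(\xb'_i)$ that sum to $\delta(f')$ shows $\delta(F)\mid f(\xb)\delta(f')$ for every $\xb\in\Z^2$, and a second B\'ezout combination on the values $f(\xb_j)$ upgrades this to $\delta(F)\mid\delta(f)\delta(f')$.

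For the reverse divisibility I would use a rescaling trick. Write $f=\delta(f)\bar f$ and $f'=\delta(f')\bar f'$, where $\bar f$ and $\bar f'$ are the integer forms obtained by dividing the coefficients of $f$ and $f'$ through; by construction $\delta(\bar f)=\delta(\bar f')=1$. Define $\bar F=F/(\delta(f)\delta(f'))$, a priori only a rational-valued quadratic form on $\Z^2$. Formula (\ref{e:comp}) rescales to $\bar F(\xb\circ\xb')=\bar f(\xb)\bar f'(\xb')$, exhibiting $\bar F$ as a form composed of the integer forms $\bar f,\bar f'$ via the \emph{same} composition law $\circ$. Applying Proposition \ref{p:int} to this new composition, $\bar F$ must itself be integer, so $\delta(f)\delta(f')$ divides every value of $F$, and in particular divides $\delta(F)$.

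The main conceptual step is the rescaling reduction, which lets me invoke Proposition \ref{p:int} as a black box rather than redo it. A direct expansion of $F(\zb)$ for general $\zb=\sum m_{ij}\eb_{ij}\in\Z^2$ in terms of the basic values $F(\eb_{ij})$ and $2F(\eb_{ij},\eb_{i'j'})$ computed in the proof of Proposition \ref{p:int} would also work, but it is more cumbersome: the half-integer summands $\tfrac12 bb'$ and $2\Delta$ appearing in $2F(\eb_{11},\eb_{22})$ and $2F(\eb_{12},\eb_{21})$ need not individually be divisible by $\delta(f)\delta(f')$, and one would have to use their cancellation, as in the proof of Proposition \ref{p:int}, to recover integrality of the cross terms. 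The rescaling sidesteps this bookkeeping entirely.
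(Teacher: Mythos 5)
Your proposal is correct and follows essentially the same route as the paper: the paper likewise observes that the gcd of the values $F(\xb\circ\xb')=f(\xb)f'(\xb')$ equals $\delta(f)\delta(f')$, and then rescales to note that $F/(\delta(f)\delta(f'))$ is composed of the integer forms $f/\delta(f)$ and $f'/\delta(f')$ via the same law $\circ$, so Proposition \ref{p:int} gives its integrality. Your explicit two-step B\'ezout argument just spells out the gcd computation that the paper states without detail.
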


\begin{proof}
  Set $m=\delta(f)$, $m'=\delta(f')$ and $M=mm'$.
  From the identity $F(\xb\circ\xb')=f(\xb)f'(\xb')$, it follows that the
  greatest common divisor of the values of $F$ at all products $\xb\circ\xb'$
  is equal to $M$.
  It remains to prove that the values $F(\zb)$ at all other points $\zb\in\Z^2$
  are divisible by $M$.
  This is equivalent to proving that the quadratic form $F/M$ is integer.
  To this end, note that $F/M$ is composed of the integer quadratic forms $f/m$
  and $f'/m'$ by means of the same composition law $\circ$.
  The result now follows from Proposition \ref{p:int}.
\end{proof}

We now prove the part of Theorem A dealing with $\sigma(F)$.

\begin{prop}
  \label{p:sigma}
  We have $\sigma(F)=\min(\sigma(f),\sigma(f'))$.
\end{prop}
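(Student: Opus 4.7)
The plan is to reduce the problem to the ``primitive'' case (where all $\delta$'s are $1$) via Proposition \ref{p:delta}, and then prove the resulting statement by direct parity analysis of the doubled polarizations of $F$.

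First I would observe that Proposition \ref{p:delta} allows replacing $f$, $f'$, $F$ with $f/\delta(f)$, $f'/\delta(f')$, $F/\delta(F)$; these remain integer forms composed by the same law $\circ$, and $\sigma$ is unchanged under the rescaling. So I may assume $\delta(f)=\delta(f')=\delta(F)=1$. In this normalized setting, $\sigma(g)=2$ if and only if the polarization of $g$ takes integer values, equivalently the middle coefficient of $g=(a_g,b_g,c_g)$ is even. The claim then reduces to the equivalence: $B$ is even if and only if both $b$ and $b'$ are even.

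For the implication ``$b$, $b'$ both even implies $B$ even'', I would reinspect the ten doubled polarizations $2F(\eb_{ij},\eb_{i'j'})$ of $F$ at the generators $\eb_{ij}=\eb_i\circ\eb_j$ already computed in the proof of Proposition \ref{p:int}. The eight values $2f(\eb_i)f'(\eb_j)$, $f(\eb_i)b'$ and $bf'(\eb_j)$ are manifestly even under the hypothesis. The remaining two values $\frac{bb'}{2}\pm 2\Delta$ require showing that both $\frac{bb'}{2}$ and $2\Delta$ are even. The first is immediate since $bb'$ is divisible by $4$. For the second, Proposition \ref{p:Del} gives $16\Delta^2=dd'$; since $b,b'$ are both even, both $d$ and $d'$ are divisible by $4$, so $dd'\in 16\Z$, forcing $\Delta\in\Z$. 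Since the $\eb_{ij}$ span $\Z^2$, bilinearity implies that every doubled polarization of $F$ is a $\Z$-linear combination of these even values, hence is itself even. Therefore $\delta'(F)\ge 2=2\delta(F)$ and $\sigma(F)=2$.

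For the reverse implication, assume without loss of generality $\sigma(f)=1$, so some doubled polarization $2f(\xb_0,\yb_0)$ is odd. I would polarize the basic identity $F(\xb\circ\zb)=f(\xb)f'(\zb)$ (viewed as a quadratic form in $\xb$ with $\zb$ fixed) in $\xb$ to obtain $F(\xb\circ\zb,\yb\circ\zb)=f(\xb,\yb)f'(\zb)$. Since $\gcd(a',b',c')=1$, there exists $\zb\in\Z^2$ with $f'(\zb)$ odd (take $\zb=\eb_1$, $\eb_2$, or $\eb_1+\eb_2$ according to which of $a'$, $c'$, $b'$ is odd). Then $2F(\xb_0\circ\zb,\yb_0\circ\zb)=2f(\xb_0,\yb_0)f'(\zb)$ is a product of two odd numbers, hence odd. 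So $\delta'(F)=1=\delta(F)$, i.e.\ $\sigma(F)=1$.

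The only mildly delicate point is the verification that $\Delta\in\Z$ in the first direction, which hinges on $16\Delta^2=dd'$ together with the congruence $d\equiv d'\equiv 0\pmod 4$. Everything else is routine parity bookkeeping.
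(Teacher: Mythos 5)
Your proof is correct. The forward implication ($\sigma(f)=\sigma(f')=2\Rightarrow\sigma(F)=2$) essentially coincides with the paper's treatment of that case: both check the doubled polarizations of $F$ at the generators $\eb_{ij}$ and both need $\Delta\in\Z$, which you justify more explicitly via $16\Delta^2=dd'$ together with $4\mid d$, $4\mid d'$ and $4\Delta\in\Z$ (the paper asserts the integrality of $\Delta$ without spelling this out). Where you genuinely diverge is in the other direction. The paper splits it into two further cases: for $\sigma(f)=\sigma(f')=1$ it uses that $4\Delta=\pm\sqrt{dd'}$ is odd, so that $F(\eb_{12},\eb_{21})-F(\eb_{11},\eb_{22})=2\Delta$ is a half-integer; for $\sigma(f)=2$, $\sigma(f')=1$ it exhibits the half-integer $F(\eb_{11},\eb_{12})=\tfrac 12 ab'$. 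You instead give a single uniform argument: polarizing $F(\xb\circ\zb)=f(\xb)f'(\zb)$ in $\xb$ alone yields $2F(\xb\circ\zb,\yb\circ\zb)=2f(\xb,\yb)f'(\zb)$, and choosing $\zb$ with $f'(\zb)$ odd (possible by primitivity of $f'$) and $\xb_0,\yb_0$ with $2f(\xb_0,\yb_0)$ odd (possible since $\sigma(f)=1$) produces an odd doubled inner product of $F$ directly. This removes a case split, avoids $\Delta$ entirely in that direction, and isolates exactly what is needed. Your up-front normalization to $\delta(f)=\delta(f')=\delta(F)=1$ via Proposition \ref{p:delta} replaces the paper's ad hoc division by powers of~2; it is legitimate because $\sigma$ is scale-invariant and the rescaled forms are still integer and composed by the same law $\circ$.
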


\begin{proof}
Suppose first that $\sigma(f)=\sigma(f')=1$.
Then, dividing the forms $f$, $f'$ by suitable powers of 2 if necessary,
we may assume that $b$ and $b'$ are odd.
By Proposition \ref{p:Del}, the number $4\Delta=\pm\sqrt{dd'}$ is also odd.
We need to prove in this case that not all inner products with respect
to $F$ are integer.
From formulas (\ref{e:Del1}) and (\ref{e:Del2}), we obtain that
$$
F(\eb_{12},\eb_{21})-F(\eb_{11},\eb_{22})=2\Delta.
$$
Since $2\Delta$ is not integer, one of the numbers $F(\eb_{12},\eb_{21})$ or
$F(\eb_{11},\eb_{22})$ is also not integer.

Suppose now that $\sigma(f)=2$, $\sigma(f')=1$
(the case $\sigma(f')=2$, $\sigma(f)=1$ is similar).
Dividing the forms $f$ and $f'$ by suitable powers of 2 and interchanging
$x$ with $y$ if necessary, we may assume that $a$ and $b'$ are odd but $b$ is even.
In this case $F(\eb_{11},\eb_{12})=\frac 12ab'$ is half-integer, hence $\sigma(F)=1$.

Finally, suppose that $\sigma(f)=\sigma(f')=2$.
We may assume that $a$ and $a'$ are odd, while $b$ and $b'$ are even.
The value $F(\eb_{11})=aa'$ is odd.
Thus it suffices to show that all inner products with respect to $F$
are integer.
Indeed, the four numbers
$$
F(\eb_{i1},\eb_{i2})=\frac 12 f(\eb_i)b',\quad F(\eb_{1j},\eb_{2j})=\frac 12 bf'(\eb_j),
\quad i,j=1,2,
$$
are integer since $b$ and $b'$ are even.
Note also that $\Delta$ is integer, hence the numbers
$$
F(\eb_{12},\eb_{21})=\frac 14bb'-\Delta,\quad F(\eb_{11},\eb_{22})=\frac 14bb'+2\Delta
$$
are integer as well.
\end{proof}

\section{Composition law}
In this section, we study the properties of the bilinear map
$\circ:\Z^2\times\Z^2\to\Z^2$.
Recall that, by our assumption, the ``products'' $\xb\circ\yb$ span $\Z^2$.

\begin{prop}
  \label{p:gcd}
  Let $e$ be a positive integer.
  If all numbers $\det(\xb\circ)$, where $\xb\in\Z^2$, and all numbers
  $\det(\circ\yb)$, where $\yb\in\Z^2$, are divisible by $e$, then $e=1$.
\end{prop}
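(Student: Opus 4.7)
The plan is to translate the divisibility hypothesis into conditions on the six pairwise determinants $\det(\eb_{ij},\eb_{i'j'})$ of the four vectors $\eb_{ij}=\eb_i\circ\eb_j$, and then contrast these with the fact that the $\eb_{ij}$ generate $\Z^2$, which forces the greatest common divisor of the six pairwise determinants to equal $1$.

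First, expanding the quadratic form $\det(\xb\circ)$ in coordinates $\xb=x_1\eb_1+x_2\eb_2$ by bilinearity of $\circ$, one obtains
\begin{align*}
\det(\xb\circ) &= \det\bigl(x_1\eb_{11}+x_2\eb_{21},\; x_1\eb_{12}+x_2\eb_{22}\bigr) \\
&= x_1^2\det(\eb_{11},\eb_{12}) + x_1x_2\bigl(\det(\eb_{11},\eb_{22})-\det(\eb_{12},\eb_{21})\bigr) + x_2^2\det(\eb_{21},\eb_{22}).
\end{align*}
Substituting $\xb=\eb_1,\eb_2,\eb_1+\eb_2$ shows that $e$ divides each coefficient, hence $e$ divides $\det(\eb_{11},\eb_{12})$, $\det(\eb_{21},\eb_{22})$ and $\det(\eb_{11},\eb_{22})-\det(\eb_{12},\eb_{21})$. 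The analogous expansion of $\det(\circ\yb)$ yields that $e$ divides $\det(\eb_{11},\eb_{21})$, $\det(\eb_{12},\eb_{22})$ and $\det(\eb_{11},\eb_{22})+\det(\eb_{12},\eb_{21})$. Taking the sum and the difference of the last two relations, $e$ also divides $2\det(\eb_{11},\eb_{22})$ and $2\det(\eb_{12},\eb_{21})$.

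Since the four vectors $\eb_{ij}$ generate $\Z^2$, the Smith normal form of the $2\times 4$ matrix with columns $\eb_{ij}$ is $(\mathrm{diag}(1,1)\mid 0)$; equivalently, the greatest common divisor of the six $2\times 2$ minors $\det(\eb_{ij},\eb_{i'j'})$ equals $1$. Writing $1$ as an integer combination of these six determinants by Bezout and multiplying the identity by $2$, every resulting summand is divisible by $e$: the four ``good'' summands because $e$ already divides the determinant, and the two ``bad'' summands because $e$ divides twice the determinant. Consequently $e\mid 2$.

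The remaining issue, and the main obstacle, is to exclude $e=2$. For this I reduce modulo $2$ and write $\bar\eb_{ij}\in\F_2^2$ for the reduction of $\eb_{ij}$. If $e=2$, then $\det(\bar\eb_{11},\bar\eb_{12})$, $\det(\bar\eb_{11},\bar\eb_{21})$, $\det(\bar\eb_{12},\bar\eb_{22})$, $\det(\bar\eb_{21},\bar\eb_{22})$ all vanish in $\F_2$, and $\det(\bar\eb_{11},\bar\eb_{22})=\det(\bar\eb_{12},\bar\eb_{21})$ there. For the six integer determinants to have greatest common divisor equal to $1$, at least one of them must be odd, and the only candidates are $\det(\eb_{11},\eb_{22})$ and $\det(\eb_{12},\eb_{21})$; sharing parity, both are odd. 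Thus $\bar\eb_{11}$ and $\bar\eb_{22}$ are linearly independent in $\F_2^2$, while $\bar\eb_{12}$ is parallel to each of $\bar\eb_{11}$ and $\bar\eb_{22}$ separately. The only vector satisfying both constraints is $\bar\eb_{12}=0$, contradicting $\det(\bar\eb_{12},\bar\eb_{21})=1$. Hence $e=1$.
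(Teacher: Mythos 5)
Your proof is correct and takes essentially the same route as the paper: you first control the four ``aligned'' determinants $\det(\eb_{ii},\eb_{ij})$, $\det(\eb_{ij},\eb_{jj})$ together with the sum and difference of the two ``crossed'' ones to force $e\mid 2$ (this is exactly the content of the paper's congruences (\ref{e:plus}) and (\ref{e:minus}) specialized to basis vectors, combined with the fact that spanning makes the gcd of the pairwise determinants equal to $1$), and then you rule out $e=2$ by a mod-$2$ linear-algebra argument on the reductions $\ol\eb_{ij}\in\F_2^2$. If anything, your endgame is slightly sharper than the paper's: by noting that both crossed determinants must be odd, you force $\ol\eb_{11},\ol\eb_{22}$ to be a basis and $\ol\eb_{12}=0$, which cleanly handles the degenerate cases that the paper's ``all four vectors are linearly dependent'' step glosses over.
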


\begin{proof}
  By the assumption, all numbers of the form $\det(\xb\circ\yb,\xb\circ\yb')$
  and all numbers of the form $\det(\xb\circ\yb,\xb'\circ\yb)$ are divisible by $e$.
  Consider the determinant
  $$
  \det((\xb+\xb')\circ\yb,(\xb+\xb')\circ\yb'),
  $$
  which is divisible by $e$ by our assumption.
  Open the parentheses in this determinant, using the bilinearity.
  Dropping the terms that are known to be divisible by $e$, we obtain that
  \begin{equation}
  \label{e:plus}
  \det(\xb\circ\yb,\xb'\circ\yb')+\det(\xb'\circ\yb,\xb\circ\yb')\equiv 0\pmod e.
  \end{equation}
  Similarly, if we start with the determinant
  $$
  \det(\xb\circ(\yb+\yb'),\xb'\circ(\yb+\yb')),
  $$
  we obtain that
  \begin{equation}
  \label{e:minus}
  \det(\xb\circ\yb,\xb'\circ\yb')-\det(\xb'\circ\yb,\xb\circ\yb')\equiv 0\pmod e.
  \end{equation}
  It follows from equations (\ref{e:plus}) and (\ref{e:minus}) that the numbers
  $\frac 2e\det(\xb\circ\yb,\xb'\circ\yb')$ are integer for all $\xb$, $\yb$, $\xb'$,
  $\yb'\in\Z^2$.
  Therefore, the numbers $\frac 2e\det(\zb,\zb')$ are integer for all
  $\zb$, $\zb'\in\Z^2$.
  Since the greatest common divisor of all values $\det(\zb,\zb')$ is equal to one,
  it follows that $e=1$ or 2.

  Suppose that $e=2$.
  We will write $\F_2$ for the field with 2 elements, and $\ol\eb_{ij}\in\F_2^2$
  for the mod 2 reduction of the vector $\eb_{ij}\in\Z^2$.
  Since we have
  $$
  \det(\ol\eb_{ii},\ol\eb_{ii})=\det(\ol\eb_{ii},\ol\eb_{ij})=
  \det(\ol\eb_{ij},\ol\eb_{ij})=0,
  $$
  and the form $\det$ is nonzero, the vectors $\ol\eb_{ii}$ and $\ol\eb_{ij}$
  must be linearly dependent for every pair $i,j=1,2$
  (otherwise this pair of vectors span $\F_2^2$, and we can easily conclude that
  $\det(\ol\zb,\ol\zb')=0$ for all $\ol\zb$, $\ol\zb'\in\F_2^2$).
  Similarly, vectors $\ol\eb_{ij}$ and $\ol\eb_{jj}$ are linearly dependent.
  But then all four vectors $\ol\eb_{ij}$ are linearly dependent.
  A contradiction with the fact that these four vectors span $\F_2^2$.
  The contradiction shows that $e=1$.
\end{proof}

The following proposition concludes the proof of Theorem A:

\begin{prop}
  \label{p:theta}
  We have $\theta(F)=\gcd(\theta(f),\theta(f'))$.
\end{prop}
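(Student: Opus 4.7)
The plan is to reduce first to the case $\delta(f)=\delta(f')=1$ (hence $\delta(F)=1$ by Proposition \ref{p:delta}), in which $\theta$ coincides with $d$ on all three forms, and then to extract the claim from the coprimality of $\nu$ and $\nu'$. For the reduction, note that $\theta$ is invariant under scaling: if $g$ is an integer form with $\delta(g)=m$, then $g/m$ is an integer form with $\delta(g/m)=1$ and $d(g/m)=d(g)/m^2$, so $\theta(g/m)=\theta(g)$. Moreover, $F/\delta(F)$ is composed of $f/\delta(f)$ and $f'/\delta(f')$ via the same composition law $\circ$, since $F(\xb\circ\xb')/\delta(F)=f(\xb)f'(\xb')/(\delta(f)\delta(f'))$. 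Hence I may assume from the start that $\delta(f)=\delta(f')=\delta(F)=1$, reducing the claim to $D=\gcd(d,d')$.

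The next step is to upgrade $\nu$ and $\nu'$ from rational to integer. Since $\xb\circ:\Z^2\to\Z^2$ is $\Z$-linear, $\det(\xb\circ)\in\Z$ for every $\xb\in\Z^2$. By Proposition \ref{p:onlyif}(3) this means $\nu f(\xb)\in\Z$ for all $\xb\in\Z^2$. As $\gcd\{f(\xb)\mid\xb\in\Z^2\}=\delta(f)=1$, an integer linear combination $\sum\lambda_i f(\xb_i)=1$ exists, and evaluating $\nu$ against it yields $\nu=\sum\lambda_i\,\nu f(\xb_i)\in\Z$. The same argument gives $\nu'\in\Z$.

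Now I invoke Proposition \ref{p:gcd}: the gcd of all numbers $\det(\xb\circ)=\nu f(\xb)$ is $|\nu|\cdot\delta(f)=|\nu|$, and similarly the gcd of all $\det(\circ\yb)$ is $|\nu'|$. If both were divisible by some common $e>1$, Proposition \ref{p:gcd} would force $e=1$, a contradiction; hence $\gcd(\nu,\nu')=1$, and therefore $\gcd(\nu^2,\nu'^2)=1$ as well. Combining this with Proposition \ref{p:onlyif}(4), which gives $d=\nu'^2 D$ and $d'=\nu^2 D$, I conclude
$$
\gcd(d,d')=D\cdot\gcd(\nu'^2,\nu^2)=D,
$$
which is exactly $\theta(F)=\gcd(\theta(f),\theta(f'))$ in the reduced setting, and hence in general by the first paragraph.

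The only real content lies in the coprimality argument of the third paragraph; the reduction and the integrality of $\nu,\nu'$ are essentially formal, but recognizing that Proposition \ref{p:gcd} is set up precisely to deliver $\gcd(\nu,\nu')=1$ is where the proof actually happens. Everything else is bookkeeping with the identities already established in Propositions \ref{p:onlyif}, \ref{p:delta}, and \ref{p:gcd}.
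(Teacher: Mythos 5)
Your proof is correct and takes essentially the same route as the paper's: both arguments combine the relations $d={\nu'}^2D$, $d'=\nu^2D$ from Proposition \ref{p:onlyif} with Proposition \ref{p:gcd} to show that the gcd of all $\det(\xb\circ)$ and the gcd of all $\det(\circ\yb)$ are coprime. You merely normalize to $\delta(f)=\delta(f')=1$ at the outset instead of dividing by $\delta(F)^2$ at the end, and you make explicit the integrality of $\nu,\nu'$, which the paper leaves implicit.
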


\begin{proof}
Each side of equation (\ref{e:circy}) represents a set of numbers parameterized by $\xb'$.
Since the two sets are equal, they have the same greatest common divisor.
Thus we obtain that $d\delta(f')^2/D$ is the greatest
common divisor of all numbers $\det(\circ\xb')$, where $\xb'$ runs through $\Z^2$.
Similarly, we obtain from equation (\ref{e:xcirc}) that $d'\delta(f)^2/D$
is the greatest common divisor of all numbers $\det(\xb\circ)$, where $\xb$ runs
through $\Z^2$.

Proposition \ref{p:gcd} implies that
$$
\gcd\left(d\delta(f')^2,d'\delta(f)^2\right)=D.
$$
Dividing both parts of this equation by $\delta(F)^2=\delta(f)^2\delta(f')^2$,
we obtain that $\theta(F)=\gcd(\theta(f),\theta(f'))$, as desired.
\end{proof}

Finally, we can conclude the proof of Theorem B.

\begin{prop}
\label{p:exist}
  Consider two binary integer forms $f$ and $f'$ such that
  $d(f)/d(f')$ is a square of a rational number.
  Then there exists a form $F$ and a composition law $\circ$ such that
  $F$ is composed of $f$ and $f'$ by means of $\circ$.
\end{prop}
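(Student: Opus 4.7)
Following Remark \ref{r:rings}, the plan is to realize $f$ and $f'$ as (scaled) norm forms on rank-$2$ $\Z$-lattices inside a common quadratic $\Q$-algebra, and then to take the product of the lattices. First, write the rational square as $d(f)/d(f') = (m/n)^2$ in lowest terms with $m, n \in \Z_{>0}$; coprimality together with $n^2 d(f) = m^2 d(f')$ forces $m^2 \mid d(f)$ and $n^2 \mid d(f')$, so $D := d(f)/m^2 = d(f')/n^2$ is an integer. Since $m$ and $n$ are not both even and $d(f), d(f') \equiv 0, 1 \pmod 4$, it follows that $D \equiv 0, 1 \pmod 4$. Thus $D$ is a valid discriminant, and one may take $K := \Q[T]/(T^2 - D)$, the associated rank-$2$ \'etale $\Q$-algebra, with norm $N$ and involution $\ol{\phantom{x}}$; write $\sqrt{D}$ for the image of $T$.

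Writing $f = (a, b, c)$ and $f' = (a', b', c')$, and (after a preliminary $GL_2(\Z)$-substitution to eliminate the cases $a = 0$ or $a' = 0$) assuming $a, a' \ne 0$, set
$$
\tau := \frac{-b + m\sqrt{D}}{2a}, \qquad \tau' := \frac{-b' + n\sqrt{D}}{2a'} \in K,
$$
and form the lattices $L := \Z + \tau\Z$, $L' := \Z + \tau'\Z$. Vieta's formulas give $\tau + \ol\tau = -b/a$ and $\tau \ol\tau = c/a$, from which a direct calculation yields
$$
N(x + y\tau) = \frac{f(x, -y)}{a}, \qquad N(x + y\tau') = \frac{f'(x, -y)}{a'}.
$$
Let $M := L L' \subset K$ be the $\Z$-submodule spanned by products $uv$ with $u \in L$, $v \in L'$; in the non-degenerate case $M$ is a rank-$2$ $\Z$-lattice. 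Choosing a $\Z$-basis identifies $M \cong \Z^2$, and the multiplication $L \times L' \to M$ transports to a bilinear map $\circ: \Z^2 \times \Z^2 \to \Z^2$ whose products span $\Z^2$ by construction.

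Next, take $F := aa' \cdot N|_M$, a priori a $\Q$-valued quadratic form on $\Z^2$. The multiplicativity of $N$ then gives
$$
F(\xb \circ \xb') = aa' \, N(u) N(v) = f(x, -y) f'(x', -y'),
$$
and a final precomposition of $\circ$ with the $GL_2(\Z)$-involution $(x, y) \mapsto (x, -y)$ on each factor yields a composition law (still with spanning products) satisfying $F(\xb \circ \xb') = f(\xb) f'(\xb')$. Thus $F$ is a composition of the integer forms $f$ and $f'$; its integrality is then automatic from Proposition \ref{p:int}.

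The main obstacle is the lattice-theoretic step of verifying that $M$ really has $\Z$-rank $2$. This is generic, but requires separate attention in the cases where $K$ fails to be a field, namely when $D$ is a nonzero rational square (so $K \cong \Q \times \Q$, where one checks linear independence of $1$ and $\tau$ in the split coordinates) and when $D = 0$ (dual numbers, in which case $\tau \in \Q$ and the construction degenerates; this case forces $f$ and $f'$ to be squares of linear forms and must be treated by an ad hoc direct construction). The preliminary equivalence reducing to $a, a' \ne 0$ is minor bookkeeping. The conceptual core is the dictionary between integer binary forms and lattices in quadratic $\Q$-algebras, combined with the integrality Proposition \ref{p:int} from the previous section.
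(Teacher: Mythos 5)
Your proposal is correct and follows essentially the same route as the paper: both realize $f$ and $f'$ (up to the scalars $a$, $a'$) as norm forms of rank-$2$ lattices in a quadratic algebra $\Q[T]/(T^2-D)$ and obtain $F$ and $\circ$ from the product lattice $LL'$ and the multiplicativity of the norm, exactly as sketched in Remark \ref{r:rings}. The only differences are cosmetic normalizations (you use $L=\Z+\tau\Z$ with the scalar $aa'$ kept outside the norm, while the paper uses $L=\sqrt{a}\,\varphi(\Z^2)$ over $\R[t]/(t^2-d)$), and you are in fact more explicit than the paper about the degenerate cases and the verification that $LL'$ has rank $2$.
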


Suppose that $f=(a,b,c)$ and $f'=(a',b',c')$.
As before, we set $d=d(f)$ and $d'=d(f')$.
Let $r$ be a rational number such that $d'=dr^2$.

We start with several remarks.
Note that $f$ can be reduced to the form
$f(x,y)=a(X^2-d Y^2)$,
where $X$ and $Y$ are linear combinations of $x$ and $y$
with rational coefficients.
Indeed, we have
$$
f(x,y)=ax^2+bxy+cy^2=
a\left(\left(x+\frac{b}{2a}y\right)^2-d\left(\frac{y}{2a}\right)^2\right).
$$
Similarly, the form $f'$ can be reduced to the form
$a'({X'}^2-dr^2{Y'}^2)$, where ${X'}$ and
${Y'}$ are
linear combinations of $x$ and $y$ with rational coefficients.
Since $r^2{Y'}^2=(r{Y'})^2$, the form $f'$ can also be reduced
to the form $a'(1,0,-d)$ by a linear substitution with rational coefficients.

Let $N$ denote the form $(1,0,-d)$.
We will think of $N$ as a quadratic form defined on $\Q^2$.
Then, by the above, there exist $\Q$-linear maps
$\varphi:\Q^2\to\Q^2$ and $\varphi':\Q^2\to\Q^2$ such that
\begin{equation}
\label{e:red}
aN(\varphi(x,y))=f(x,y),\quad a'N(\varphi'(x,y))=f'(x,y).
\end{equation}

\begin{proof}[Proof of Proposition \ref{p:exist}]
  Consider the algebra $\A=\R[t]/(t^2-d)$ (the quotient of the polynomial
  algebra $\R[t]$ by the principal ideal generated by $t^2-d$).
  We will write $\eps$ for the class of the polynomial $t$ in $\A$.
  Thus $\A$ is a vector space over $\R$ with basis $1$, $\eps$, and we
  have $\eps^2=d$.
  (If $d$ is negative, then $\A$ is isomorphic to $C$; if $d$ is positive,
  then $\A$ is isomorphic to the algebra of hyperbolic numbers.)
  For every element $u=x+y\eps$, we set $\ol u=x-y\eps$.
  We will write $N(u)$ for $u\ol u$.
  Since $N(x+y\eps)=x^2-dy^2$, this is consistent with our previous notation $N=(1,0,-d)$.

  Consider the $\Q$-linear maps $\varphi$, $\varphi':\Q^2\to\A$, for which
  formula (\ref{e:red}) holds.
  Define the lattices $L=\sqrt{a}\varphi(\Z^2)$, $L'=\sqrt{a'}\varphi'(\Z^2)$.
  The product $LL'$ is defined as the additive subgroup of $\A$ spanned
  by the products $uu'$, where $u$ runs through $L$ and $u'$ runs through $L'$.
  Note that this construction is similar to that given in Remark \ref{r:rings}.
  The additive group $LL'$ is a finitely generated subgroup of the additive group
  $\A=\R^2$.
  It follows from the classification of finitely generated Abelian groups
  that $LL'$ is isomorphic to $\Z^2$.
  Let $s:LL'\to\Z^2$ be an isomorphism.

  We can now define a bilinear map $\circ:\Z^2\times\Z^2\to\Z^2$ by
  the formula $\xb\circ\xb'=s(\sqrt{aa'}\varphi(\xb)\varphi'(\xb'))$.
  The product of $\varphi(\xb)$ and $\varphi'(\xb')$ is taken in the algebra $\A$.
  We also define a quadratic form $F$ on $\Z^2$ by the formula $F(\zb)=N(s^{-1}(\zb))$.
  The property $N(uv)=N(u)N(v)$ now implies that $F$ is composed of $f$ and $f'$
  by means of $\circ$.
\end{proof}

\end{document}